\newtheorem{theorem}{Theorem}
\newtheorem{lemma}[theorem]{Lemma}
\theoremstyle{remark}
\newtheorem{remark}[theorem]{Remark}
\numberwithin{equation}{section}
\newcommand{\R}{\mathbb{R}}
\newcommand{\Z}{\mathbb{Z}}
\newcommand{\C}{\mathbb{C}}
\newcommand{\ZZ}{{\mathbb{Z}}}
\newcommand{\ri}{{\mathrm{i}}}
\newcommand{\re}{{\mathrm{e}}}
\newcommand{\AckNIDispHyd}[1]{#1 would like to thank the Isaac Newton Institute for Mathematical Sciences for support and hospitality during programme \emph{Dispersive hydrodynamics: mathematics, simulation and experiments, with applications in nonlinear waves}, when work on this paper was undertaken. This work was supported by EPSRC Grant Number EP/R014604/1.}
\title{The role of periodicity in the solution of third order boundary value problems}
\author{
    B. Pelloni* \& D. A. Smith\textsuperscript{\textdagger} \\
    \footnotesize
    *Heriot-Watt University \& Maxwell Institute for the Mathematical Sciences, \\
    \footnotesize
    Edinburgh, Scotland
    \href{mailto:b.pelloni@hw.ac.uk}{b.pelloni@hw.ac.uk} \\
    \footnotesize
    \textsuperscript{\textdagger}Yale-NUS College \& National University of Singapore, Singapore \\
    \footnotesize
    \href{mailto:dave.smith@yale-nus.edu.sg}{dave.smith@yale-nus.edu.sg}
}
\date{\today}
\begin{document}
\maketitle

\begin{abstract}
    In this short paper, we elucidate how the solution of certain illustrative boundary value problems for the Airy equation $u_t+u_{xxx}=0$ on $[0,1]$ can be expressed as a perturbation of the solution of the purely periodic problem. The motivation is to understand the role boundary conditions play in the properties of the solution. This is particularly important in related work on the solution of linear dispersive problems with discontinuous initial data and the phenomena of revivals and fractalization.
\end{abstract}

\subsubsection*{Keywords}
Initial boundary value problems,
Regularity of solutions,
Fokas transform method,
Revivals and fractalization.

\section{Introduction}\label{introduction}
Linear dispersive equations such as the free-space Schr\"odinger or Airy equations, respectively
\[
iu_t-u_{xx}=0\qquad \mbox{or}\qquad u_t+u_{xxx}=0,\qquad \mbox{where } u=u(x,t),
\]
are  important models in the mathematical modelling of reality, as they constitute a powerful way to capture the dominant linear behaviour driving the time evolution of many physical phenomena that propagate in a wave-like manner.
We consider them in one space dimension, so $x\in\R$, with $t>0$ denoting time.  These are the simplest equations that, mathematically, capture the main features of the behaviour of even- and odd-order linear dispersive problems.

Our physical reality demands that we pose these equations on a finite interval. We will assume in all that follows that $x\in [0,1]$, and that we are given initial and boundary conditions that yield a well-posed problem that admits a unique solution.

\medskip
While the method of solution  of these linear boundary value problems, via separation of variables or eigenfunction expansion,  has been a standard tool of the mathematical trade for a very long time, these techniques are not universally applicable. In particular,  they rely on the full and explicit knowledge of the eigenstructure of the spatial linear differential operator. However, when the boundary conditions are such that this operator is not self-adjoint, this eigenstructure may not be known or easily determined. This is particularly true for odd-order operators, which is the case we focus on in this note.

We also remark that care must be taken when the given initial or boundary data are not sufficiently regular, as it is then necessary  to interpret the solution and its representation in a suitably weak sense. We will not dwell on this aspect in this note, but it is an important consideration for some of the applications, notably the study of revivals and weak revivals,  that motivated the considerations in this paper.

More precisely, Our motivation is recent work on the so-called {\em Talbot effect}, or {\em revival} phenomenon~\cite{BFP,olver2018revivals}.
This phenomenon, first described experimentally in the mid 1850's by scientist and pioneer of photography, William Henry Fox Talbot~\cite{Tal1836a} and rediscovered several times since in other dispersive systems (notably by Olver~\cite{olver2010dispersive}), occurs when an initial datum with jump discontinuities is propagated periodically. What happens then is that the behaviour of the solution at times that are rational multiples of a certain quantity related to the length of the interval, that we call rational times,  is markedly different from the behaviour at generic times.  At rational times, the solution is a superposition of translated and dilated copies of the initial conditions, so in particular it is spatially discontinuous, while at generic times the solution is spatially continuous (albeit nowhere differentiable).
See~\cite{smith2020revival} for a recent survey of these phenomena.

While studying whether this phenomenon persists in more general situations, for example for more general boundary conditions, or for nonlinear dispersive PDEs, it is natural to consider what echo is left by  the solution of the linear periodic problem, for which the phenomenon of revivals occurs. This is  the question that we consider, for selected examples, in this paper.

\section{A motivating example and the Unified Transform of Fokas}
One particularly natural and surprisingly hard problem, illustrating the limitations of the eigenfunction expansion technique, is the following.

Assume $u(x,t)$ solves the following {\em Dirichlet type} boundary value problem on $[0,1]$ for the Airy equation:
\begin{align}
\label{pseD}
&u_t+u_{xxx}=0,\qquad &&x\in(0,1), \;t>0, \nonumber
\\
&u(x,0)=f(x),\qquad &&
x\in(0,1),\;
\\
&u(0,t)=u(1,t)=\partial_xu(1,t)=0,\qquad&& t>0.\nonumber
\end{align}
Here, and in what follows, we do not make specific assumptions on the regularity of the initial datum $f(x)$. If $f(x)$ is sufficiently smooth, we might expect the solution representation to be valid pointwise, though this depends on the compatibility at the corners $(0,0)$ and $(1,0)$  (see also e.g. \cite{trogdon}).  As already remarked, the regularity assumptions on $f(x)$  can be relaxed by giving a weaker definition of solution.

It is remarkable that, while the associated differential operator has an infinite number of discrete eigenvalues, the associated eigenfunctions do not form an unconditional basis, so that there is no generalised Fourier series representation for the solution of this problem; separation of variables cannot yield a solution of this problem. Although this was formally established as far back as 1915~\cite{jackson}, the knowledge fell into obscurity until the problem was rediscovered and solved  within a more general setting~\cite{pelloni2005spectral,fokas2005transform,smithfokas}.

However, the problem can be fully and effectively solved  using the Fokas transform method \cite{fokas1997}. Indeed, using this approach it can be shown that  this problem has a unique solution $u(x,t)$, \cite{pelloni2005spectral}.
The solution admits the explicit contour integral representation
\begin{multline*}
2\pi u(x,t)=\int_{\mathbb R} \re^{ikx+ik^3t}\widehat{u_0}(k)dk \\
+\int_{\Gamma^+}\re^{ikx+ik^3t}\frac {\zeta^+(k)}{\Delta(k)}dk
+\int_{\Gamma^-}\re^{ik(x-1)+ik^3t}\frac {\zeta^-(k)}{\Delta(k)}dk,
\end{multline*}
where $\Gamma^{\pm}$ are the contours in $\C^\pm$ defined as the locus of $\Re(i k^3)=0$, $\widehat{u_0}(k)$ denotes the Fourier transform of $u_0(x)$, and $\zeta^\pm(k)$, $\Delta(k)$ are entire functions of $k$ only, fully determined by the function $u_0(x)$.  The zeros of $\Delta$  are the cube roots of the eigenvalues of the spatial differential operator, and it can be shown that they are not on the integration contours~\cite[\S A]{pelloni2005spectral}. Hence the integrals are well defined.

\medskip
This leaves open the question of how this problem relates to the simpler case of periodic boundary conditions, whose solution has a classical representation in terms of a Fourier series.  This is the question we consider below, for this example as well as for examples of boundary conditions that couple the two ends of the interval $[0,1]$.

\section{Non-periodic boundary value problems}

Let $u(x,t)$ denote the solution of a given boundary value problem of the form
\begin{subequations} \label{eqn:genbvp}
\begin{align}
&u_t+u_{xxx}=0,\qquad &&x\in(0,1), \;t>0,
\\
&u(x,0)=f(x),\qquad &&
x\in(0,1),\;
\\ &\mbox{\em three homogeneous boundary conditions on } u(x,t). \label{genbvp.BC}
\end{align}
\end{subequations}
We assume that the prescribed boundary conditions are not periodic ones, and that they are such that the solution exists and is unique. For the Airy equation, the boundary conditions for which this well-posedness holds are characterised in \cite{smith2012}. In particular,  this is the case for the illustrative examples we examine below.  We stress that we rely crucially on the Fokas transform approach to guarantee that such existence results hold for  the examples given.

\medskip
To relate the solution of such a boundary value problem with the solution of the periodic problem, we consider a natural decomposition of the solution $u(x,t)$.
Namely, let $v(x,t)$ denote the solution of the purely periodic problem, with the same initial condition $v(x,0)=f(x)$,  so that $v(x,t)$ satisfies
\begin{align}\label{vprob}
&v_t+v_{xxx}=0,\;\; x\in(0,1), \;t>0;
\qquad
v(x,0)=f(x),\;\;x\in(0,1);
\\
&\partial_x^jv(0,t)=\partial_x^j v(1,t),\quad j=0,1,2,\quad t>0.
\end{align}

The function $v(x,t)$ admits the following explicit representation as a Fourier series, pointwise if $v(x,t)$ is sufficiently smooth (e.g. at least H\"older continuous), and in  $L^2[0,1]$ otherwise:
\begin{equation} \label{eqn:vsoln}
v(x,t)=\sum_{n\in\ZZ} \re^{ik_nx+ik_n^3t}\hat{f}(k_n), \qquad k_n=2\pi n.
\end{equation}
The spectral structure of this problem is entirely understood: the spectrum is fully discrete and given by $\{k_n, \;n\in\Z\}$, while the associated eigenfunctions $\{\re^{ik_n x},\;n\in\mathbb Z\}$ form a complete basis with respect to the $L^2$ Hilbert structure.

\medskip
We then define the auxiliary function $w$ as
\begin{equation}\label{wdef}
w(x,t):=u(x,t)-v(x,t).
\end{equation}
The function $w$ is fully determined by the given functions $u$ and $v$, hence all its boundary values are known.   This function encodes information on how the given boundary conditions change the nature of the solution when compared with the solution of the periodic problem.

In each of the following sections, we will select different boundary conditions~\eqref{genbvp.BC}, and examine the properties of the resulting $w$, in particular its regularity, with the aim of characterising $u$ as a $w$-regularity perturbation of $v$.

For most of the $u$ problems considered here, it is appropriate to select $v$ as the solution of a periodic problem and  then $w$ can be viewed as the solution of a problem with zero initial condition and  with boundary conditions that are, formally, inhomogeneous periodic or quasiperiodic conditions. To emphasize the role of the inhomogeneities in the boundary conditions of $w$, we describe such problems as ``forced (quasi)periodic''.

In the last case, we choose $v$ as the solution of a quasiperiodic, rather than periodic, problem.

In table~\ref{tbl:Summary} we summarise the type of boundary value problem given for $u$ and selected for $v$. Once these are given, the function $w$ is fixed but the boundary value problem for it can be given either in terms of the boundary values of $u$ or of $v$, resulting in a different problem for $w$; the third column in this table  summarises what particular problem is selected for $w$ for each of the examples we treat in this paper.

\begin{table}
    \centering
    \setlength{\tabcolsep}{7pt}
    \renewcommand{\arraystretch}{1.2}
    \begin{tabular}{lllll}
      \textbf{$u$ problem} &  \textbf{$v$ problem} & \textbf{$w$ problem} &  \textbf{result} & \textbf{eg} \\ \hline
       Dirichlet type & periodic & forced periodic &  thm~\ref{thm:main} & \S\ref{ssec:Uncoupled} \\ \cline{5-5}
         &  &  &  & \S\ref{ssec:Mixed} \\ \cline{1-5}
         $u(0,t)=u(1,t)$ & periodic & forced periodic  &  thm~\ref{thm:main} & \S\ref{ssec:Pseudo} \\ \cline{1-5}
       quasiperiodic &periodic  & forced quasiperiodic &  rmk~\ref{remQuasiRevivals} & \S\ref{ssec:Quasi} \\ \hline
  $u(0,t)=\re^{i\theta}u(1,t)$ &      quasiperiodic & forced quasiperiodic &  rmk~\ref{remMakevQuasi} & \\ \hline
    \end{tabular}
    \caption{Summary of problems, examples, and results.}
    \label{tbl:Summary}
\end{table}

\subsection{Uncoupled BC of Dirichlet type} \label{ssec:Uncoupled}

Here we consider the problem \eqref{pseD} for $u(x,t)$. In this case,  the function $w(x,t)$ satisfies
\begin{subequations}\label{wsys}
\begin{align}
\label{wsys.PDE.IC}
&w_t+w_{xxx}=0,\;\; x\in(0,1), \;t>0; &w(x,0)=0,\;\;&x\in(0,1);
\\
&w(0,t)=w(1,t),  \quad& t>0;&\\
& \partial_xw(0,t)=\partial_xw(1,t)+h_1(t),  \quad& t>0;&\\
& \partial_{xx}w(0,t)=\partial_{xx}w(1,t)+h_2(t), \quad &t>0.&
\end{align}
\end{subequations}
The known, smooth functions $h_1(t)$ and $h_2(t)$ are given by
\begin{align*}
&h_1(t)=\partial_{x}u(0,t),\\
&h_2(t)=\partial_{xx}u(0,t)-\partial_{xx}u(1,t).
\end{align*}
Hence $w$ can be regarded as the solution of a forced periodic problem, with zero initial condition.

\begin{lemma}\label{basiclem}
The function $w(x,t)$ that solves~\eqref{wsys} is a continuous function of $x$ and admits the representation
\begin{equation}\label{pseudoDir}
w(x,t)=\sum_{n\in \Z} \re^{ik_nx+ik_n^3t}\left[ik_nH_1(k_n,t)+H_2(k_n,t)\right],
\end{equation}
with $k_n = 2\pi n$,
\begin{equation}\label{bigH}
H_1(k,t)=\int_0^t\re^{-ik^3 s}h_1(s)ds,\qquad H_2(k,t)=\int_0^t\re^{-ik^3 s}h_2(s)ds.
\end{equation}
\end{lemma}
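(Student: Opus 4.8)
The plan is to solve the forced periodic problem~\eqref{wsys} directly by the Fokas transform method, or equivalently by expanding $w(x,t)$ in the periodic eigenbasis $\{\re^{ik_nx}\}_{n\in\Z}$ with time-dependent coefficients and treating the boundary inhomogeneities $h_1,h_2$ as forcing. First I would write $w(x,t)=\sum_{n\in\Z}c_n(t)\re^{ik_nx}$; substituting into the PDE $w_t+w_{xxx}=0$ formally gives $c_n'(t)=ik_n^3 c_n(t)$ for each $n$, so that $c_n(t)=\re^{ik_n^3t}c_n(0)$. The zero initial condition forces $c_n(0)=0$, so a naive eigenfunction expansion would give $w\equiv0$ — the point is that the expansion is \emph{not} valid because $w$ does not satisfy the periodic boundary conditions; the inhomogeneities $h_1,h_2$ enter precisely through the boundary terms that are discarded when one integrates by parts to diagonalize $\partial_x^3$. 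So the honest route is to carry out the global relation / Fokas-transform computation: take the spatial ``Fourier transform'' $\hat{w}(k,t)=\int_0^1\re^{-ikx}w(x,t)\,dx$, differentiate in $t$, use the PDE and three integrations by parts, and collect the boundary terms $w(0,t),w(1,t),\partial_xw(0,t),\partial_xw(1,t),\partial_{xx}w(0,t),\partial_{xx}w(1,t)$. Using~\eqref{wsys} the boundary terms at $x=0$ and $x=1$ combine so that all the unknown boundary data cancel in pairs and only the known combinations $h_1(t),h_2(t)$ survive; solving the resulting ODE in $t$ for $\hat{w}(k,t)$ with $\hat{w}(k,0)=0$ yields an expression involving exactly the integrals $H_1(k,t),H_2(k,t)$ of~\eqref{bigH}, with the prefactors $ik$ and $1$ respectively coming from the one and two derivatives in $h_1,h_2$.

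Next I would invert. The Fokas representation produces $w(x,t)$ as a contour integral over $\R$ plus integrals over the curves $\Gamma^\pm=\{\Re(ik^3)=0\}\cap\C^\pm$, of the same shape as the Dirichlet-type formula quoted in the excerpt, but now the spectral functions are the elementary quantities $ik H_1(k,t)+H_2(k,t)$ (times $\re^{ikx+ik^3t}$) rather than ratios $\zeta^\pm/\Delta$. The key structural fact is that, because the boundary conditions of $w$ are \emph{periodic} up to forcing, the function $\Delta(k)$ for this problem is (a constant multiple of) $\re^{ik}-1$, whose zeros are exactly $k=k_n=2\pi n$. One then deforms the contours $\Gamma^\pm$ back towards $\R$ and closes them, picking up residues at the zeros of $\Delta$, i.e. at $k=k_n$; a standard estimate (decay of $H_j(k,t)$ for large $k$ along the relevant sectors, since the integrands are analytic and decay in the closed regions between $\Gamma^\pm$ and $\R$) shows the arc contributions vanish, and the $\int_\R$ term cancels against part of the deformed integrals. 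What survives is precisely the residue sum, which is the Fourier-type series~\eqref{pseudoDir}. Alternatively, and perhaps more cleanly for a short paper, one verifies~\eqref{pseudoDir} directly: plug the series into the PDE (term by term it solves $w_t+w_{xxx}=0$ since $\partial_t[\re^{ik_n^3t}H_j(k_n,t)] = ik_n^3\re^{ik_n^3t}H_j + \re^{ik_n^3t}\cdot\re^{-ik_n^3t}h_j(t) = ik_n^3(\cdots)+h_j(t)$, and the $h_j(t)$ pieces assemble into something killed by $\partial_x^3$ on the periodic basis... ) — one checks the initial condition $H_j(k_n,0)=0$, and one checks that the three boundary conditions in~\eqref{wsys} hold by summing the series at $x=0,1$ and differentiating. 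Continuity in $x$ follows because, after the leading $ik_n$ factor, the summand decays like $H_1(k_n,t)=O(k_n^{-1})$ (integration by parts in $s$, using smoothness of $h_1$), giving an absolutely convergent, hence continuous, series; the $H_2$ term is even better behaved.

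The main obstacle is making the inversion/deformation rigorous: one must justify that the contour integrals over $\Gamma^\pm$ converge, that the deformation onto $\R$ is legitimate (no poles crossed, arc contributions negligible), and that the interchange of summation and the various limits is valid — this is exactly where the smoothness of $h_1,h_2$ and the decay of $H_1,H_2$ in the appropriate half-planes are used, and where one must check, as in~\cite[\S A]{pelloni2005spectral}, that the zeros $k_n$ of $\Delta$ do not lie on $\Gamma^\pm$ (here they lie on $\R$, which is the boundary between the two regions, so the residue bookkeeping must be done with care). If one instead takes the direct-verification route, the main obstacle shifts to justifying termwise differentiation of the series~\eqref{pseudoDir} in $x$ up to second order and in $t$ once — legitimate in $L^2$ and, given enough smoothness of $f$ (hence of $h_1,h_2$), pointwise — and to confirming that the boundary data of the series match~\eqref{wsys} rather than the unforced periodic ones; this last check is the crux, since it is precisely the failure of $w$ to be periodic that must be reproduced by the non-uniform convergence of $\sum \re^{ik_nx+ik_n^3t}\,ik_nH_1(k_n,t)$ at the endpoints.
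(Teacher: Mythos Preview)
Your first paragraph is on the right track and is in fact the paper's whole argument, but you misstate one point and then overcomplicate the inversion. After the three integrations by parts and use of the boundary conditions~\eqref{wsys}, the unknown boundary data do \emph{not} all cancel for general $k$: what one actually obtains is
\[
\hat w(k,t)=\re^{ik^3t}\bigl[(\re^{-ik}-1)G(k,t)+ikH_1(k,t)+H_2(k,t)\bigr],
\]
with $G$ still carrying the unknown traces $w(1,\cdot),w_x(1,\cdot),w_{xx}(1,\cdot)$. The paper's key simplification, which you effectively rediscover in your second paragraph but dress up in unnecessary machinery, is simply to \emph{evaluate at} $k=k_n=2\pi n$, where $\re^{-ik}-1=0$, so the $G$ term drops out. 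Since $w(\cdot,t)\in L^2[0,1]$ it has an ordinary Fourier series $w(x,t)=\sum_n \re^{ik_nx}\hat w(k_n,t)$, and~\eqref{pseudoDir} follows immediately. There is no need for $\Gamma^\pm$ contours, deformation, residue bookkeeping, or a separate $\Delta(k)$; the inversion is just the classical Fourier series on $[0,1]$.

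Two smaller points. Your decay estimate for continuity is too pessimistic: the phase in $H_j$ is $k^3s$, not $ks$, so one integration by parts in $s$ (using differentiability of $h_j$) gives $H_j(k_n,t)=O(k_n^{-3})$, hence the $n$th coefficient in~\eqref{pseudoDir} is $O(k_n^{-2})$ and the series converges absolutely and uniformly in $x$. Your ``direct verification'' alternative is not a good route here: checking that the series~\eqref{pseudoDir} reproduces the \emph{inhomogeneous} periodic boundary conditions of~\eqref{wsys} would require summing conditionally convergent series at the endpoints, and in any case you would be verifying a formula you have not yet derived.
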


\begin{proof}
Consider the Fourier transform of $w(x,t)$ on $[0,1]$, defined by
$$
\hat w(k,t)=\int_0^1\re^{-ikx}w(x,t)dx.
$$
Then, using the Fourier transform and integration by parts, the PDE for $w(x,t)$ yields the following ODE for $\hat w(k,t)$:
\begin{multline*}
\bigl(\re^{-ik^3 t}\hat w(k,t)\bigr)_t
=-[k^2w(0,t)-ik w_x(0,t)-w_{xx}(0,t)] \\
+\re^{-ik}[k^2w(1,t)-ikw_x(1,t)-w_{xx}(1,t)].
\end{multline*}
We set
\begin{subequations} \label{eqn:defnFG}
\begin{align}
F(k,t)&=\int_0^t\re^{-ik^3s}[k^2w(0,s)-ikw_x(0,s)-w_{xx}(0,s)]ds, \\
G(k,t)&=\int_0^t\re^{-ik^3s}[k^2w(1,s)-ikw_x(1,s)-w_{xx}(1,s)]ds.
\end{align}
\end{subequations}
Then, since $\hat w(k,0)=0$, the solution of the ODE is given by
$$
\hat w(k,t)=
\re^{ik^3t}\bigl[-F(k,t)+\re^{-ik}G(k,t)\bigr].
$$
Using the boundary conditions, we find
$$
F(k,t)=G(k,t)-ikH_1(k,t)-H_2(k,t),
$$
with $H_1(k,t)$, $H_2(k,t)$ defined in \eqref{bigH}.
Hence
$$
\hat w(k,t)=
\re^{ik^3t}\bigl[(\re^{-ik}-1)G(k,t)+ikH_1(k,t)+H_2(k,t)\bigr].
$$
Evaluating this expression at $k=k_n$ for $n\in\Z$, we obtain
$$
\hat w(k_n,t)=\re^{ik_n^3t}\bigl[ikH_1(k_n,t)+H_2(k_n,t)\bigr].
$$
Hence, using the Fourier series representation
\begin{equation}\label{frep}
w(x,t)=\sum_{n\in\Z}\re^{ik_nx}\hat w(k_n,t),
\end{equation}
we arrive at the representation \eqref{pseudoDir} for $w(x,t)$.

Since the functions $h_1(t)$ and $h_2(t)$ are differentiable, the coefficients in the series \eqref{pseudoDir} decay at least as $1/k_n^2$, which guarantees the continuity of $w(x,t)$ with respect to $x$.
\end{proof}

\begin{remark}\label{rem1}
    It is crucial in the argument above that the first boundary term in the Fourier transform of the PDE, namely the two terms $k^2 w(0,t)$ and $k^2 w(1,t)$, vanish. Indeed the presence of either of these terms would make the decay in $k$ of the coefficients in the Fourier series~\eqref{frep} too slow to guarantee that the solution is continuous.
\end{remark}

From all this we infer that the solution $u(x,t)$ of the original Dirichlet type problem has the formal representation
\begin{multline*}
u(x,t)=v(x,t)+w(x,t)=\sum_{n\in\Z}\re^{ik_nx+ik_n^3t}\hat f(k_n) \\
+\sum_{n\in\Z}\re^{ik_nx+ik_n^3t}
\int_0^t\re^{-ik_n^3s}(ik_n u_x(0,s)+u_{xx}(0,s)-u_{xx}(1,s))ds.
\end{multline*}

If we did not have the a priori knowledge that the  function $u(x,t)$ exists and is unique, the above would be purely a formal expression, with nothing new to offer; it would not yield a way to represent $u(x,t)$ effectively.
However, because we do have wellposedness of~\eqref{pseD}, expressing $u(x,t)$ in this way gives information on how its regularity properties depend on the regularity of initial and boundary conditions.

For the solution $v(x,t)$ of the purely periodic problem,  the regularity depends only on the functional class of $f(x)$. It is less known that if $f(x)$ is only of bounded variation, but not continuous, the regularity of the solution remains in the same class at certain values of the time in a dense set of measure $0$, but improves for almost all $t$. This is known in the context of the periodic problem as the phenomenon of {\em revivals}.

The second sum on the right hand side of the expression above for $u(x,t)$ conveys information on how the regularity is affected by the (homogeneous) boundary conditions.
Lemma~\ref{basiclem} implies that the second term is always continuous as a function of $x$.
Therefore, $u$ itself is a continuous perturbation of $v$.

\subsection{Mixed BC of Dirichlet type} \label{ssec:Mixed}

If the boundary conditions for $u(x,t)$ include the Dirichlet-type condition   $u(0,t)=u(1,t)=0$,  plus another condition possibly coupling the ends of the interval $[0,1]$, the analysis of the previous examples remain essentially unaltered. For example, if the third condition is $u_x(0,t)=\gamma u_x(1,t)$, for some $\gamma\in(0,1)$, the  argument detailed above follows through with
$$
H_1(k,t)=\int_0^t (\gamma-1)u_x(1,s)\re^{-ik^3s}ds,
$$
and the value of $H_2(k,t)$ given by the latter of equations~\eqref{bigH}.
Therefore, the same conclusion can be drawn: regardless of the regularity (or lack of regularity) of $u(x,t)$ as a function of $x$, the function $w(x,t)$ is continuous in $x$, so $u$ is a continuous perturbation of $v$.

\medskip
Note that, unlike the previous example, in this case the spatial operator admits an $L^2$ basis of eigenfunctions, even though the eigenvalues cannot be determined explicitly other than as roots of a transcendental equation. Using the Fokas transform approach, the associated generalised Fourier series can be determined by a contour deformation technique~\cite{pelloni2005spectral,smithfokas}.

\subsection{Coupled BC: pseudo-periodic} \label{ssec:Pseudo}
We now turn to boundary conditions that couple the endpoints of the interval $[0,1]$, and assume that the given boundary conditions for $u(x,t)$ are the pseudo-periodic  conditions
\begin{equation}\label{bcpp}
\beta_j\partial_x^j u(0,t)= \partial_x^j  u(1,t),\quad j=0,1,2,\qquad \beta_j\in\C.
\end{equation}
Conditions need to be imposed on the $\beta_j$'s to ensure the problem is well-posed, see \cite{smith2012}. We assume this to be the case.

The function $w(x,t)$ satisfies, along with a zero initial condition, the  boundary conditions
\begin{equation} \label{eqn:PP.wperiodic.BC}
\partial_x^jw(0,t)=\partial_x^jw(1,t)+h_j(t),  \quad j=0,1,2,
\end{equation}
where, in this case,
$$
h_j(t)=(1-\beta_j)\partial_x^ju(0,t), \qquad j=0,1,2.
$$
A lemma entirely analogous to Lemma~\ref{basiclem} yields for $w(x,t)$ the representation
\begin{equation}\label{pseudoper}
w(x,t)=\sum_{n\in \Z} \re^{ik_nx+ik_n^3t}\left[-k_n^2H_0(k_n,t)+ik_nH_1(k_n,t)+H_2(k_n,t)\right],
\end{equation}
where $k_n=2n\pi$ and
$$
H_j(k,t)=\int_0^t\re^{-ik^3s}(1-\beta_j )\partial_x^ju(0,s)ds.
$$

As noted in Remark~\ref{rem1}, the function $w(x,t)$ now has coefficients that can be guaranteed to decay only as $n^{-1}$, and therefore it may have lower regularity than the given initial datum $f(x)$, assumed H\"older continuous.
However, if it so happens that $\beta_0=1$, then $H_0=0$, so the coefficients in equation~\eqref{pseudoper} decay like $n^{-2}$, $w$ is continuous and, as before, $u$ is a continuous perturbation of $v$.

\medskip
The results presented in the previous sections can be summarised and generalised as the following theorem.
\begin{theorem} \label{thm:main}
    Suppose problem~\eqref{eqn:genbvp} is wellposed and $u$ is its solution.
    If the given linearly independent boundary conditions~\eqref{genbvp.BC} are either
    \[
        u(0,t) = 0, \qquad u(1,t) = 0, \qquad \mbox{one other boundary condition on }u
    \]
    or
    \[
        u(0,t) = u(1,t), \qquad \mbox{two other boundary conditions on }u,
    \]
    then
    $u(x,t)=v(x,t)+w(x,t)$,
    for $v$ the solution~\eqref{eqn:vsoln} of periodic problem~\eqref{vprob} and $w$ a continuous function of $x$.
\end{theorem}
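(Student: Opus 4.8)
The plan is to reduce the theorem to the two computational lemmas already established (Lemma~\ref{basiclem} and its pseudo-periodic analogue) by checking that, in every admissible configuration of boundary conditions, the function $w = u - v$ solves a forced periodic problem whose Fourier coefficients decay at least like $k_n^{-2}$. First I would set up the Fourier transform ODE exactly as in the proof of Lemma~\ref{basiclem}: writing $\hat w(k,t) = \int_0^1 \re^{-ikx} w(x,t)\, dx$, the PDE and integration by parts give $\bigl(\re^{-ik^3 t}\hat w(k,t)\bigr)_t = -\bigl[k^2 w(0,t) - ik w_x(0,t) - w_{xx}(0,t)\bigr] + \re^{-ik}\bigl[k^2 w(1,t) - ik w_x(1,t) - w_{xx}(1,t)\bigr]$, and since $\hat w(k,0)=0$ we obtain $\hat w(k,t) = \re^{ik^3 t}\bigl[-F(k,t) + \re^{-ik} G(k,t)\bigr]$ with $F,G$ as in~\eqref{eqn:defnFG}. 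Evaluating at $k = k_n = 2\pi n$ kills the factor $\re^{-ik}-1$, so $\hat w(k_n,t) = \re^{ik_n^3 t}\bigl[G(k_n,t) - F(k_n,t)\bigr]$, and the whole question becomes: how fast does $G(k_n,t) - F(k_n,t)$ decay in $n$?

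Next I would observe that $G - F$ is built from the three differences $\partial_x^j w(1,s) - \partial_x^j w(0,s)$, $j=0,1,2$, weighted by $1$, $ik$, and $k^2$ respectively; since $w = u-v$ and $v$ is genuinely periodic, these differences equal $\partial_x^j u(1,s) - \partial_x^j u(0,s)$, which are determined entirely by the prescribed boundary conditions on $u$. The key point is that the $k^2$-weighted term — the dangerous one, as emphasised in Remark~\ref{rem1} — must vanish identically. In the first case, $u(0,t) = u(1,t) = 0$ forces $w(0,t) = w(1,t) = 0$ (because $v$ is periodic, hence $v(0,t) = v(1,t)$, but more directly $v(0,t)$ need not be zero; rather one uses that $w(1,t) - w(0,t) = u(1,t) - u(0,t) - (v(1,t)-v(0,t)) = 0 - 0 = 0$), so the $k^2$ term in $G-F$ cancels and we are left with contributions weighted only by $ik$ and $1$, i.e. $\hat w(k_n,t) = \re^{ik_n^3 t}[ik_n H_1(k_n,t) + H_2(k_n,t)]$ for suitable bounded, differentiable $H_1, H_2$ — exactly the situation of Lemma~\ref{basiclem}. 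In the second case, $u(0,t) = u(1,t)$ (not necessarily zero) still gives $w(1,t) - w(0,t) = 0$, so again the $k^2$ term disappears, and the remaining two boundary conditions on $u$ — whatever they are — supply the differences $\partial_x^j u(1,s) - \partial_x^j u(0,s)$ for $j=1,2$ as known smooth functions $h_1(t), h_2(t)$; the pseudo-periodic computation of~\eqref{pseudoper} then specialises (with $H_0 \equiv 0$) to the same $k_n^{-2}$-decaying series.

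Then I would invoke differentiability of the boundary data: in the well-posed setting the solution $u$ is smooth in $t$ away from $t=0$, so the functions $h_j(t) = \partial_x^j u(1,t) - \partial_x^j u(0,t)$ (resp. $\partial_x^j u(0,t)$) are differentiable, whence integration by parts in $H_j(k_n,t) = \int_0^t \re^{-ik_n^3 s} h_j(s)\, ds$ yields $H_j(k_n,t) = O(k_n^{-3})$; combined with the weights $ik_n$ and $1$ this gives Fourier coefficients of $w$ of size $O(k_n^{-2})$, uniformly on compact $t$-intervals, and the Weierstrass $M$-test delivers uniform convergence of the series~\eqref{frep}, hence continuity of $w$ in $x$. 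Finally $u = v + w$ is immediate from the definition~\eqref{wdef}. The main obstacle, and the only place real care is needed, is the bookkeeping that shows the $k^2$-weighted boundary term genuinely cancels in \emph{both} families of boundary conditions simultaneously — that is, that the hypotheses ``$u(0,t)=u(1,t)=0$'' and ``$u(0,t)=u(1,t)$'' are each exactly strong enough to annihilate $w(1,t) - w(0,t)$ while leaving the other one or two boundary conditions free; once that is pinned down, the decay estimate and the appeal to the lemmas are routine.
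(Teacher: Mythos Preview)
Your proposal is correct and follows essentially the same route as the paper: the theorem is presented there as a summary of the preceding sections, and the argument you outline---reduce to the Fourier-transform ODE of Lemma~\ref{basiclem}, evaluate at $k_n=2\pi n$ so that $\re^{-ik}-1$ vanishes, and observe that in both families of boundary conditions the difference $w(1,t)-w(0,t)=u(1,t)-u(0,t)$ is zero so the dangerous $k_n^2$ term of Remark~\ref{rem1} drops out---is exactly the mechanism the paper uses. Your explicit invocation of integration by parts in $s$ and the Weierstrass $M$-test makes the $O(k_n^{-2})$ decay and the continuity conclusion slightly more transparent than the paper's one-line appeal to differentiability of $h_1,h_2$, but the substance is identical.
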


\subsection{An outlier: quasi-periodic BC} \label{ssec:Quasi}

For the particular case that $\beta_0=\beta_1=\beta_2$ in example~\eqref{bcpp}, known as the quasi-periodic case, one can pursue an alternative argument to give some interesting qualitative information about the function $u(x,t)$.
This information is consistent with the fact that quasi-periodic problems for the Airy equation do not in general exhibit the phenomenon of weak revivals~\cite{BFP}.
It is also consistent with the fact that the spectral structure of the quasi-periodic spatial operator can be easily derived by a shift on the structure of the periodic operator, and is well known.
 
\medskip
Assume that the given boundary conditions for $u(x,t)$ are the quasi-periodic conditions
\begin{equation}\label{bcqp}
\partial_x^j u(0,t)= \re^{i\theta}\partial_x^j  u(1,t),\quad j=0,1,2, \quad \theta\in\R.
\end{equation}
The function $w(x,t)$ satisfies, along with zero initial conditions, the boundary conditions
\begin{equation} \label{eqn:QP.wquasiperiodic.BC}
\partial_x^j w(0,t)= \re^{i\theta}\partial_x^j w(1,t)+(1-\re^{-i\theta}) \partial_x^j v(1,t),   \;\;t>0,
\end{equation}
where $v(x,t)$ is the solution of the purely periodic problem \eqref{vprob}.
It is still true that $w$ obeys boundary conditions~\eqref{eqn:PP.wperiodic.BC} with $\beta_0=\beta_1=\beta_2=\re^{-\ri\theta}$, but we shall make use of the alternative characterisation~\eqref{eqn:QP.wquasiperiodic.BC} in the following argument.

\begin{lemma}
The function $w(x,t)$ solution of~\eqref{wsys.PDE.IC},~\eqref{eqn:QP.wquasiperiodic.BC} admits the representation
\begin{equation}\label{qprep}
w(x,t)=\sum_{n\in \Z} \re^{i(k_n-\theta)x+i(k_n-\theta)^3t}\int_0^t(1-\re^{i\theta})V(k_n-\theta,s)\re^{-i(k_n-\theta)^3s}ds,
\end{equation}
with $k_n=2n\pi$ and
\begin{equation}\label{bigv}
V(k,t)=k^2v(0,t)-ikv_x(0,t)-v_{xx}(0,t).
\end{equation}
\end{lemma}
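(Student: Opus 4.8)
The plan is to repeat the argument of Lemma~\ref{basiclem}, with the periodic finite Fourier transform replaced by a quasi-periodic one. First I would set $\hat w(k,t)=\int_0^1\re^{-ikx}w(x,t)\,dx$; applying this to the equation $w_t+w_{xxx}=0$ and integrating by parts three times in $x$ gives, exactly as in the proof of Lemma~\ref{basiclem}, the ODE
\[
\bigl(\re^{-ik^3t}\hat w(k,t)\bigr)_t=\re^{-ik^3t}\bigl(-W_0(k,t)+\re^{-ik}W_1(k,t)\bigr),
\]
where $W_0$ and $W_1$ denote the boundary form $k^2w-\ri k w_x-w_{xx}$ evaluated at $x=0$ and $x=1$ respectively; since $\hat w(\cdot,0)=0$, integrating in $t$ expresses $\hat w(k,t)$ through the $t$-integrals of $\re^{-ik^3s}W_0(k,s)$ and $\re^{-ik^3s}W_1(k,s)$.

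Next I would use the boundary conditions~\eqref{eqn:QP.wquasiperiodic.BC} to substitute for $\partial_x^jw(0,t)$, $j=0,1,2$, inside $W_0$; collecting the $w$-terms into $W_1$ and, via the periodicity $\partial_x^jv(0,t)=\partial_x^jv(1,t)$, rewriting the $v$-terms as $V$ from~\eqref{bigv}, one finds $W_0(k,t)=\re^{i\theta}W_1(k,t)+(\re^{i\theta}-1)V(k,t)$, so the ODE becomes
\[
\bigl(\re^{-ik^3t}\hat w(k,t)\bigr)_t=\re^{-ik^3t}\bigl((\re^{-ik}-\re^{i\theta})W_1(k,t)+(1-\re^{i\theta})V(k,t)\bigr).
\]
The decisive step---the only place where the quasi-periodic structure is genuinely used---is to evaluate this at the shifted frequencies $k=k_n-\theta$, $k_n=2\pi n$: since $\re^{-i(k_n-\theta)}=\re^{-ik_n}\re^{i\theta}=\re^{i\theta}$, the coefficient $\re^{-ik}-\re^{i\theta}$ of the \emph{unknown} boundary contribution $W_1$ vanishes identically, and integrating from $t=0$ with vanishing initial datum gives
\[
\hat w(k_n-\theta,t)=\re^{i(k_n-\theta)^3t}\int_0^t(1-\re^{i\theta})V(k_n-\theta,s)\,\re^{-i(k_n-\theta)^3s}\,ds.
\]
This mirrors the step in Lemma~\ref{basiclem} where one evaluates at $k=k_n$ so that $\re^{-ik_n}=1$ kills the coefficient of the unknown term $G$.

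Finally I would reconstruct $w$ from the numbers $\hat w(k_n-\theta,t)$. The point to record is that, $\theta$ being real, $\{\re^{i(k_n-\theta)x}\}_{n\in\Z}$ is a complete orthonormal system of $L^2[0,1]$---it is the standard Fourier basis times the unimodular factor $\re^{-i\theta x}$---and the coefficient of $\re^{i(k_n-\theta)x}$ in the expansion of $g\in L^2[0,1]$ equals $\int_0^1\re^{-i(k_n-\theta)x}g(x)\,dx$; applying this with $g=w(\cdot,t)$ gives $w(x,t)=\sum_{n\in\Z}\hat w(k_n-\theta,t)\,\re^{i(k_n-\theta)x}$, and substituting the previous display yields~\eqref{qprep}. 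I do not anticipate a real obstacle: beyond the two observations just made---that $k_n-\theta$ is precisely the shift annihilating the unknown term, and that the shifted exponentials still form an $L^2$-basis---the computation is that of Lemma~\ref{basiclem} with $k_n$ replaced throughout by $k_n-\theta$. It is worth adding that, since $V$ is built only from the explicit periodic solution $v$, the $t$-integral in~\eqref{qprep} is a known smooth function of $t$, so the representation is genuinely explicit; the mode of convergence of the series and the $x$-regularity of $w$ are separate issues, addressed in the remarks that follow.
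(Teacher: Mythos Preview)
Your proposal is correct and follows essentially the same argument as the paper: derive the ODE for $\hat w(k,t)$ by integration by parts, use the quasiperiodic boundary relation to link the $x=0$ and $x=1$ boundary forms modulo a known $V$-term, then evaluate at $k=k_n-\theta$ so that the factor $\re^{-ik}-\re^{i\theta}$ annihilates the unknown boundary contribution, and invert via the shifted exponential basis. The only cosmetic difference is that you eliminate $W_0$ in favour of $W_1$ whereas the paper eliminates $G$ in favour of $F$; your explicit remark that $\{\re^{i(k_n-\theta)x}\}_{n\in\Z}$ is orthonormal in $L^2[0,1]$ is a welcome justification of the inversion step that the paper leaves implicit.
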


\begin{proof}
Consider the Fourier transform of $w(x,t)$, which satisfies as before the following ODE:
$$
\bigl(\re^{-ik^3 t}\hat w(k,t)\bigr)_t=-F(k,t)+\re^{-ik}G(k,t),
$$
with $F(k,t)$ and $G(k,t)$ given by equations~\eqref{eqn:defnFG}.
Then, since $\hat w(k,0)=0$, the solution of the ODE is given by
$$
\hat w(k,t)=
\re^{ik^3t}\bigl[-F(k,t)+\re^{-ik}G(k,t)\bigr].
$$
Using the boundary conditions, we find
$$
\re^{-i\theta}F(k,t)=G(k,t)+(1-\re^{-i\theta})\int_0^t\re^{-ik^3s}V(k,s)ds,
$$
with $V(t,k)$ defined in \eqref{bigv}.
Hence
$$
\hat w(k,t)=
\re^{ik^3t}\left[(\re^{-i(k+\theta)}-1)F(k,t)+(\re^{-i(k+\theta)}-\re^{-ik})\int_0^t\re^{-ik^3s}V(k,s)ds\right].
$$
Evaluating this expression at $k=k_n-\theta$ for $n\in\Z$, we obtain
$$
\hat w(k_n-\theta,t)=\re^{i(k_n-\theta)^3t}(1-\re^{i\theta})\int_0^t\re^{-i(k_n-\theta)^3s}V(k_n-\theta,s)ds.
$$
We now invert this to obtain the generalised Fourier series expression
\begin{equation}\label{frep2}
w(x,t)=\sum_{n\in\Z}\re^{i(k_n-\theta)x}\hat w(k_n,t),
\end{equation}
which is the representation~\eqref{qprep} for $w(x,t)$.
\end{proof}

From all this, we infer that the solution $u(x,t)$ of the original quasi-periodic problem has the formal representation
\begin{multline} \label{eqn:urep3}
u(x,t)=v(x,t)+w(x,t)
=\sum_{n\in\Z}\re^{ik_nx+ik_n^3t}\hat f(k_n) \\
+\sum_{n\in\Z}\re^{i(k_n-\theta)x}\re^{i(k_n-\theta)^3t}\int_0^t(1-\re^{i\theta})\re^{-i(k_n-\theta)^3s}V(k_n-\theta,s)ds.
\end{multline}
The function $V(k,t)$ is made up of the boundary values of the $x$-periodic function $v(x,t)$.
Knowledge of this function as well as the characterisation of the eigenvalues of the spatial operator, is enough to represent $u(x,t)$ effectively, and $V(k,t)$ can easily be calculated from representation~\eqref{eqn:vsoln}.

\begin{remark}\label{remQuasiRevivals}
    Note that the presence of the term $k_n^2 v(0,t)$ in the definition of the generalised  Fourier coefficient $\widehat{w}(k,t)$ implies that the convergence of the series for $w(x,t)$ is slow and not uniform; unless $v(0,t)=0$, this term implies a no better regularity of $w$ than that of the solution of the purely periodic problem.

    Note also that the second term on the right of representation~\eqref{eqn:urep3} contains the exponential $\re^{i(k_n-\theta)x+i(k_n-\theta)^3t}$ which is  both space- and time-periodic with a period congruent to $\theta+\mathbb Q$, while $v(x,t)$ is periodic with period in $\mathbb Q$. Therefore if $\theta \notin\mathbb Q$, the function $u(x,t)$  cannot have any periodicity property.
    This confirms the result of~\cite{BFP}, namely the fact that this quasi-periodic problem, surprisingly, does not exhibit revivals if $\theta \notin\mathbb Q$.
\end{remark}

\begin{remark}\label{remMakevQuasi}
    Suppose the boundary conditions for $u$ are
    \begin{equation}\label{bcuquasizero}
        u(0,t)=\re^{i\theta}u(1,t), \qquad \mbox{two other boundary conditions on }u,
    \end{equation}
    with $\theta\in\mathbb R$ but, to avoid the regime already covered by theorem~\ref{thm:main}, suppose $\theta$ is not an even integer multiple of $\pi$.
    Suppose this problem for $u$ is wellposed.
    Note that this includes certain pseudoperiodic problems~\eqref{bcpp}, but not all wellposed such problems.
    
    We can make the decomposition $u(x,t)=v(x,t)+w(x,t)$ with $v$ satisfying the quasiperiodic problem
    \begin{align*}
        &v_t+v_{xxx}=0,\;\; x\in(0,1), \;t>0;
        \qquad
        v(x,0)=f(x),\;\;x\in(0,1);
        \\
        &\partial_x^j v(0,t)= \re^{i\theta}\partial_x^j v(1,t),\quad j=0,1,2,\quad t>0,
    \end{align*}
    and $w$ satisfying the boundary forced quasiperiodic problem
    \begin{align*}
        &w_t+w_{xxx}=0,\;\; x\in(0,1), \;t>0;
        \qquad
        w(x,0)=0,\;\;x\in(0,1);
        \\
        &\partial_x^j w(0,t)= \re^{i\theta}\partial_x^j w(1,t) + h_j(t),\quad j=0,1,2,\quad t>0,
    \end{align*}
    in which
    \[
        h_0(t) = 0, \qquad h_1(t) = u_x(0,t)-\re^{\ri\theta}u_x(1,t), \qquad h_2(t) = u_{xx}(0,t)-\re^{\ri\theta}u_{xx}(1,t).
    \]
    Then, as discussed above, the (non)existence of revivals for $v$ is determined by the (ir)rationality of $\theta$ and, using an argument exactly paralleling the proof of lemma~\ref{basiclem}, $w$ is continuous.
    Therefore, $u$, being a continuous perturbation of $v$, exhibits continuous perturbations of revivals if and only if $\theta\in\mathbb Q$.
    This is the analogue of theorem~\ref{thm:main} for boundary conditions~\eqref{bcuquasizero}.
\end{remark}

\subsubsection*{Conclusion}

We have embedded the solution of the periodic problem in the solution of certain classes of homogeneous boundary value problems to determine how the boundary conditions perturb the qualitative properties of the periodic solution.

For homogeneous Dirichlet type separated boundary conditions, and for boundary conditions that describe continuous extension from $[0,1]$ to $(-\infty,\infty)$, we found that the remaining one or two boundary conditions add a component that superimposes a continuous function of $x$ onto the periodic solution, irrespective of the overall $x$ regularity of the full solution.

On the other hand, in the case of some particular quasi-periodic problems, which in the case of second order problems can always be recast in terms of periodic boundary conditions, this approach confirms that while the solution depends only on the boundary values of the periodic solutions, the boundary conditions not only  add a less regular component to the purely periodic solution, but also that
the interaction between the periodic and the non-periodic part of the solution can silence completely the echo of periodicity.

These remarks have particularly significant consequences in case of low-regularity initial data and the phenomenon of weak revivals. This is explored further in~\cite{BFPS}.

\section*{Acknowledgement}

\AckNIDispHyd{The authors}

\bibliographystyle{amsplain}
\bibliography{./references}

\end{document}